\documentclass{article}
\usepackage{amsmath,amstext, amsthm, amssymb, dsfont,amscd,pslatex,fourier,color}
\usepackage[all,poly]{xy}
\usepackage[numbers]{natbib}

\usepackage[pdfstartview=FitH,
            colorlinks,
           linkcolor=reference,
            citecolor=citation,
            urlcolor=e-mail]{hyperref}
\definecolor{e-mail}{rgb}{0,.40,.80}
\definecolor{reference}{rgb}{.20,.60,.22}
\definecolor{citation}{rgb}{0,.40,.80}

\usepackage[totalheight=7.5in, totalwidth=6.25in,paperwidth=7in,paperheight=9.25in]{geometry}

\RequirePackage{color}

\date{}

\tolerance=10000
\definecolor{todo}{rgb}{1,0,0}
\definecolor{answer}{rgb}{0,0,1}

\title{New effective differential Nullstellensatz\thanks{This work was partially supported by the NSF grants CCF-0952591 and DMS-1413859.}}

\author{Richard Gustavson, Marina Kondratieva,  and Alexey Ovchinnikov \vspace{0.05in}\\ \small
CUNY Graduate Center, Department of Mathematics, 365 Fifth Avenue,
New York, NY 10016, USA\\ \small
\href{mailto:rgustavson@gc.cuny.edu}{rgustavson@gc.cuny.edu}  \\ \small 
Moscow State University, Department of Mechanics and Mathematics, Moscow, Russia\\ \small
\href{mailto:kondratieva@sumail.ru}{kondratieva@sumail.ru}\vspace{0.05in} \\ \small 
CUNY Queens College, Department of Mathematics,
65-30 Kissena Blvd, Queens, NY 11367, USA \\ \small
CUNY Graduate Center, Department of Mathematics, 365 Fifth Avenue,
New York, NY 10016, USA\\ \small
\href{mailto:aovchinnikov@qc.cuny.edu}{aovchinnikov@qc.cuny.edu}
\vspace{-0.25in}}

\newtheorem{theo}{Theorem}[section]
\newtheorem{lemma}[theo]{Lemma}

\newtheorem{cor}[theo]{Corollary}

\theoremstyle{definition}
\newtheorem{ex}[theo]{Example}

\newtheorem{rem}[theo]{Remark}

\numberwithin{theo}{section}
\numberwithin{equation}{section}

\def\N{\mathbb{N}}


\def\<{\langle}
\def\>{\rangle}

\def\ord{\operatorname{ord}}

\DeclareMathOperator{\Quot}{\mathrm Quot}

\newcommand{\Qb}{\mathbb{Q}}

\newcommand{\Le}{\leqslant}
\newcommand{\Ge}{\geqslant}

\newcommand{\K}{K}

\begin{document}

\maketitle

\abstract{We show new upper and lower bounds for the effective differential Nullstellensatz  for differential fields of characteristic zero with several commuting derivations. Seidenberg was the first to address this problem in 1956, without giving a complete solution. The first explicit bounds appeared in 2009 in a paper by Golubitsky, Kondratieva, Szanto, and Ovchinnikov, with the upper bound expressed in terms of the Ackermann function. D'Alfonso, Jeronimo, and Solern\'o, using novel ideas, obtained in 2014 a new bound if restricted to the case of one derivation and constant coefficients.  To obtain the bound in the present paper without this restriction, we extend this approach and use the new methods of Freitag and Le\'on S\'anchez and of Pierce  from 2014, which represent a model-theoretic approach to differential algebraic geometry.
}

\section{Introduction}
It is a fundamental problem to determine whether a system $F=0$, $F = f_1,\ldots,f_r$, of polynomial PDEs with coefficients in a differential field $\K$ is consistent, that is, it has a  solution in a differential field containing $\K$. Differential elimination \cite{Boulier2009,Hubert} is an effective method that can answer this question, and its implementations (including {\sc Maple} packages) can handle examples of moderate size if a sufficiently powerful computer is used. The differential Nullstellensatz states that the above consistency is equivalent to showing that the equation $1=0$ is not a differential-algebraic consequence of the system $F=0$. Algebraically, the latter says that $1$ does not belong to the differential ideal generated by $F$ in the ring of differential polynomials.

The complexity of the effective differential Nullstellensatz is not just a central problem in the algebraic theory of partial differential equations but is also a key to understanding the complexity of differential elimination. It is often the case that this leads to substantial improvements in algorithms.
Let $F = 0$ be a system of  polynomial PDEs in $n$ differential indeterminates  (dependent variables) and $m$ derivation operators $\partial_1,\ldots,\partial_m$ (that is, with $m$ independent variables), of total order $h$ and degree $d$. For every  non-negative integer $b$,  let $F^{(b)}=0$ be the set of differential equations obtained from the system $F=0$ by differentiating each equation in it $b$ times with respect to any combination of $\partial_1,\ldots,\partial_m$. An upper bound for the effective differential Nullstellensatz is a numerical function $b(m, n, h, d)$ such that, for all such $F$, the system $F = 0$ is inconsistent if and only if the system of polynomial equations in $F^{(b(m,n,h,d))}$ is inconsistent.
By the usual Hilbert's Nullstellensatz, the latter is equivalent to $$1\in \left(F^{(b(m,n,h,d))}\right).$$ For example, in the system of polynomial PDEs 
\begin{equation}\label{eq:deq}\left\{\begin{array}{ll}
    u_x+v_y=0\\
    u_y-v_x=0 & \hspace{1cm} \\
    (u_{xx}+u_{yy})^2+(v_{xx}+v_{yy})^2=1
  \end{array}\right.
  \end{equation}
  $\partial_1 = \partial/\partial_x$, $\partial_2 = \partial/\partial_y$, and so $m=2$, the differential indeterminates are $u$ and $v$, and so $n=2$, the maximal total order of derivatives is $h=2$, and the maximal total degree is $d=2$.  The corresponding system of polynomial equations is
\begin{equation*}\left\{\begin{array}{ll}
    z_1+z_2=0\\
    z_3-z_4=0  \\
    (z_5+z_6)^2+(z_7+z_8)^2=1
  \end{array}\right.
  \end{equation*}
  which is consistent (e.g., take $z_1=\ldots=z_7=0$ and $z_8=1$).
On the other hand, system~\eqref{eq:deq} is inconsistent. Indeed, applying $\partial_1$ and $\partial_2$ to the first and second equations in~\eqref{eq:deq}, consider the extended system
\begin{equation}\label{eq:deq2}\left\{\begin{array}{ll}
    u_x+v_y=0\\
    u_y-v_x=0\\
        u_{xx}+v_{xy}=0\\
    u_{yy}-v_{xy}=0\\
    u_{xy}+v_{yy}=0\\
    u_{xy}-v_{xx}=0\\
    (u_{xx}+u_{yy})^2+(v_{xx}+v_{yy})^2=1
  \end{array}\right.
\end{equation} It now remains to substitute the sum of the third and fourth equations and the difference of the fifth and sixth equations into the last equation to obtain $0=1$. The equivalent polynomial system is
$$\left\{\begin{array}{ll}
    z_1+z_2=0\\
    z_3-z_4=0\\
        z_5+z_9=0\\
    z_6-z_9=0\\
    z_{10}+z_8=0\\
    z_{10}-v_7=0\\
    (z_5+z_6)^2+(z_7+z_8)^2=1
  \end{array}\right.$$
  which is inconsistent by the above reasoning. In this particular example, it is enough to differentiate the first two equations of~\eqref{eq:deq} only once to discover that the corresponding polynomial system is inconsistent.
  
  Our main result, Theorem~\ref{thm:main}, provides a uniform upper bound on the number of differentiations needed for all systems of polynomial PDEs with the number of derivations, indeterminates, total order, and total degree bounded by $m$, $n$, $h$, and $d$, respectively. This bound substantially outperforms the previously known general upper bound~\cite{DiffNull}.
Our result reduces the problem to the polynomial effective Nullstellensatz, which has been very well studied, with many sharp results  available (see, for example, \cite{Brownawell,Dube2,Jelonek,Kollar,Teresa} and the references given there). On the other hand, note that our problem is substantially more difficult that this problem, because the polynomial effective Nullstellensatz corresponds (see Theorem~\ref{theo:polytodiff}) to the effective differential Nullstellensatz restricted to systems of linear ($d=1$) PDEs in one indeterminate ($n=1$) with constant coefficients, and we do not make these restrictions.

The effective differential Nullstellensatz was first addressed in~\cite{Seidenberg}, without providing a complete solution. In the ordinary case ($m=1$), the first bound, which was triple-exponential in $n$ and polynomial in $d$ appeared in~\cite{Grigoriev}. The first general formula for the upper bound and first series of examples for the lower bound in the case of $m$ derivations appeared in~\cite{DiffNull}. That formula is expressed in terms of the Ackermann function and is primitive recursive but not elementary recursive in $n,h,d$ for each fixed $m$ and is not primitive recursive in $m$. In the case of constant coefficients and $m=1$, an important breakthrough was made in~\cite{DJS}, where a double-exponential bound in $n$ was given. 

In the present paper, we go much beyond the final result of~\cite{DJS} and use the new methods discovered by logicians for fields with several commuting derivations \cite{FS,Pierce} to obtain a new upper bound for the most general case: the coefficients do not have to be constant and we allow any number $m$. For any $m$, our bound is polynomial in $d$. For  $m=1,2$, a more concrete analysis of the bound is given in Section~\ref{sec:concrete}, which shows that our bound is elementary recursive in these cases. In particular, for $m=1$, it is double-exponential in $n$ and $h$ and is polynomial in $d$, as in~\cite{DJS}, but does not require constant coefficients. For $m=2$, it is iterated exponential in $h$ of length $n+2$. Our Examples~\ref{ex:42} and~\ref{ex:46} show lower bounds that are polynomial in $h$ and $d$ and exponential in $mn$.

The paper is organized as follows. We begin in Section~\ref{sec:basicdefs} with introducing the concepts and notation that we further use in the paper. Section~\ref{sec:mainsec} contains the main result of the paper, Theorem~\ref{thm:main}, as well as a discussion of the bound for small numbers of derivations in Section~\ref{sec:concrete}. The lower bound is given in Section~\ref{sec:lowerbound}.
\section{Basic definitions}\label{sec:basicdefs}
A detailed introduction to the subject can be found in~\cite{BuiumDADG,Kap,Kol,MarkerMTDF}. We will introduce only what is used in the paper.
A {\it differential ring} $(\K,\Delta)$ is a commutative ring $\K$ with a finite set $\Delta = \{\partial_1,\ldots,\partial_m\}$ of pairwise commuting derivations on $\K$.  We let $$\Theta = \left\{\partial_1^{i_1}\cdot\ldots\cdot\partial_m^{i_m}\:\big|\: i_j\Ge 0,\ 1\Le j\Le m\right\}.$$ For $\theta= \partial_1^{i_1}\cdot\ldots\cdot\partial_m^{i_m}$, we let $$\ord\theta = i_1+\ldots+i_m.$$ Let also
$$
R=\K\{y_i\:|\: 1\Le i\Le n\} := \K[\theta y_i\:|\: \theta\in\Theta, 1\Le i \Le n]\ \ \text{and}\ \  R_h = \K\big[\theta y_i\:\big|\: 1\Le i\Le n,\ \ord\theta \Le h\big],\quad h\Ge 0.
$$
The ring $R$ defined above is called the {\it ring of differential polynomials} in differential indeterminates $y_1,\ldots,y_n$ and with coefficients in $\K$. The ring $R$ is naturally a differential ring. We will use, what we will call, an {\it orderly ranking} $>$ on $\Theta$. This is a total order on $\Theta$ such that, for all $\theta_1$, $\theta_2 \in \Theta$, if  $\ord\theta_1 > \ord\theta_2$, then $\theta_1 > \theta_2$. An example of such a ranking is given by ordering the $n$-tuples of exponents in $\Theta$ degree-lexicographically.

For a subset $F$ of a ring $R$, $(F)$ denotes the ideal generated by $F$ and $\sqrt{(F)}$ denotes the radical ideal generated by $F$. For a subset $F$ of a differential ring $R$, $[F]$ denotes the differential ideal generated by $F$ in $R$ and $\{F\}$ denotes the radical differential ideal generated by $F$ in $R$. Note that, if $\Qb \subset R$, then $\{F\}=\sqrt{[F]}$.

A field $L$ is called  {\it differentially closed} if, for every $F \subset L\{y_1,\ldots,y_n\}$, the existence of a differential field $M\supset L$ and $(a_1,\ldots,a_n) \in M^n$ such that, for all $f \in F$, $f(a_1,\ldots,a_n)=0$ implies the existence of $(b_1,\ldots,b_n) \in L^n$ with, for all $f \in F$, $f(b_1,\ldots,b_n)=0$. In other words, $L$ is differentially closed if and only if the inconsistency of a system of polynomial differential equations with coefficients in $L$ is preserved under differential field extensions of $L$.

Let $\K$ be a differential field of characteristic zero.
The {\it weak form of the differential Nullstellensatz} states that, for all $F \subset \K\{y_1,\ldots,y_n\}$, $1\notin[F]$ if and only if, for all differentially closed fields $L\supset K$, there exists $(a_1,\ldots,a_n) \in L^n$ such that, for all $f \in F$, $f(a_1,\ldots,a_n)=0$. The {\it strong form of the differential Nullstellensatz} states that for all $F \subset \K\{y_1,\ldots,y_n\}$ and $g \in \K\{y_1,\ldots,y_n\}$, $g\in \sqrt{[F]}$ if and only if, for all differentially closed fields $L\supset K$ and all $(a_1,\ldots,a_n) \in L^n$ such that, for all $f \in F$, $f(a_1,\ldots,a_n)=0$, we have $g(a_1,\ldots,a_n)=0$. 
\section{Main result}\label{sec:mainsec}
We will start by showing several auxiliary results in Section~\ref{sec:prep}. The main result, Theorem~\ref{thm:main}, is contained in Section~\ref{subsec:main}. This is continued with an analysis of our estimate for particular numbers of derivations in Section~\ref{sec:concrete}.
\subsection{Preparation}\label{sec:prep}
Let $l \Ge 0$ and $J\subset R_l$ be an ideal. For each $k \in \N$, let
$J^{(k)}$ be the ideal of the ring $R_{l+k}$ generated by the derivatives of the elements of $J$ up to order $k$ (cf. \cite{MoosaScanlon}), that is,
$$J^{(k)}=\big(\theta g\:|\: g\in I,\:\ord\theta\Le k\big).$$  For $D\in\Theta$,  let
$J^{(D)}$ be the ideal of $R_{l+\ord D}$ generated by the derivatives of the elements of $J$ not exceeding $D$ in an orderly ranking, that is,
$$J^{(D)}=\big(\theta g\:|\: g\in I,\: \theta\Le D\big).$$
For every ideal $J$ of the ring $R_l$,  we let $$J'=\sqrt{(\theta J\:|\:\ord \theta\Le 1)}\cap R_l.$$ 
We also let  
$$\alpha_l=\binom{l+m}{m}.$$ 
Note that $$\dim_{\K}R_l = n\alpha_l.$$
\begin{lemma}\label{lem:1}
Let    $J\subset R_l$ be an ideal, $p >0$, and $\big(J'\big)^ p\subseteq J^{(1)}$. Then, for all $k\Ge 0$,
$$\sqrt{J'^{(k)}}
 \subseteq \sqrt{J^{(kp+1)}}.$$ 
 \end{lemma}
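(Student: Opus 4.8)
The plan is to reduce the inclusion to its generators and then prove a sharper pointwise statement by a nested induction. Since $\sqrt{J^{(kp+1)}}$ is a radical ideal and $J'^{(k)}$ is generated by the elements $\theta g$ with $g\in J'$ and $\ord\theta\Le k$, it suffices to show each such generator lies in $\sqrt{J^{(kp+1)}}$. As $J^{(a)}\subseteq J^{(b)}$ whenever $a\Le b$ (and hence $\sqrt{J^{(a)}}\subseteq\sqrt{J^{(b)}}$), this follows from the claim
$$(\star_j)\colon\quad \theta g\in\sqrt{J^{(jp+1)}}\ \text{ for every }\ g\in J'\ \text{ and every }\ \theta\in\Theta\ \text{ with }\ \ord\theta=j,$$
which I would prove by induction on $j$. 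The base case $j=0$ is exactly the hypothesis: $g\in J'$ gives $g^p\in(J')^p\subseteq J^{(1)}$, so $g\in\sqrt{J^{(1)}}$. Throughout I use the routine fact that $\psi\bigl(J^{(s)}\bigr)\subseteq J^{(s+\ord\psi)}$ for any $\psi\in\Theta$, since differentiating a generator $\varphi h$ ($h\in J,\ \ord\varphi\Le s$) produces generators of $J^{(s+\ord\psi)}$ together with lower terms.

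The crucial device for the inductive step is to differentiate the honest membership $g^p\in J^{(1)}$ not by $\theta$ but by the operator $\theta^p\in\Theta$, whose order is $pj$. On one side, $g^p\in J^{(1)}$ gives $\theta^p(g^p)\in J^{(pj+1)}$, the index matching the target exactly. On the other side, expanding $\theta^p(g^p)$ by the Leibniz rule and singling out the term in which each of the $p$ factors $g$ is differentiated by precisely $\theta$, I obtain an honest identity
$$\theta^p\bigl(g^p\bigr)=c\,(\theta g)^p+M,$$
where $c$ is a positive integer (a product of multinomial coefficients, hence invertible in characteristic zero) and $M$ is the sum of the remaining Leibniz terms, each a product $\theta_1 g\cdots\theta_p g$ with $\theta_1\cdots\theta_p=\theta^p$ and $(\theta_1,\dots,\theta_p)\neq(\theta,\dots,\theta)$. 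It therefore suffices to show $M\in\sqrt{J^{(jp+1)}}$: then $c(\theta g)^p\in\sqrt{J^{(jp+1)}}$, whence $\theta g\in\sqrt{J^{(jp+1)}}$.

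To place each term of $M$ in the radical ideal $\sqrt{J^{(jp+1)}}$ it is enough to exhibit one factor that already lies there, since a radical ideal absorbs the rest of the product. If some $\theta_t=\id$, that factor is $g\in\sqrt{J^{(1)}}\subseteq\sqrt{J^{(jp+1)}}$. If all $\theta_t\neq\id$ but some has $\ord\theta_t<j$, that factor is handled by the outer induction hypothesis $(\star_{\ord\theta_t})$. The genuinely delicate case — the one I expect to be the \emph{main obstacle} — is when every factor has order exactly $j$; this really occurs once $m\Ge 2$, for instance the cross term $\partial_1^2 g\cdot\partial_2^2 g$ arising when $\theta=\partial_1\partial_2$ and $p=2$. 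Here no factor is visibly in the radical, and the induction on $j$ says nothing, since all factors sit at the current order.

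I would resolve this by refining the induction on $j$ into a nested induction that also runs over the order-$j$ operators in increasing order for the ranking. The key point is combinatorial: the operators $\theta_1,\dots,\theta_p$ all have order $j$, satisfy $\theta_1\cdots\theta_p=\theta^p$, and are not all equal to $\theta$; since a ranking (for example the degree-lexicographic one) is compatible with multiplication in $\Theta$, at least one $\theta_t$ must be strictly below $\theta$ — otherwise $\theta_t\Ge\theta$ for all $t$ with one strict inequality, and multiplying these $p$ relations would give $\theta_1\cdots\theta_p>\theta^p$, contradicting $\theta_1\cdots\theta_p=\theta^p$. That factor $\theta_t g$ is then an order-$j$ derivative ranked below $\theta$, hence lies in $\sqrt{J^{(jp+1)}}$ by the nested induction hypothesis, so the whole term does too. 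The nested base case, the ranking-minimal operator of order $j$, has no equal-order cross terms at all, since nothing of order $j$ lies below it, so only the two easy cases occur. This closes both inductions and gives $(\star_j)$ for all $j$, hence the lemma; no separate order estimate is needed, as applying the order-$pj$ operator $\theta^p$ to $J^{(1)}$ landed automatically in $J^{(pj+1)}$.
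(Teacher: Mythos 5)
Your proof is correct and follows essentially the same route as the paper's: your Leibniz identity $\theta^p\big(g^p\big)=c\,(\theta g)^p+M$, with every term of $M$ containing a factor $\theta_t g$ ranked strictly below $\theta g$, is exactly the paper's key identity~\eqref{eq:1}, and your nested induction on the order and then on the ranking within each order is the paper's induction along the orderly ranking on $D$. Your one-line multiplicativity argument for locating a strictly lower-ranked factor is a slightly cleaner packaging of the paper's coordinate-by-coordinate contradiction, but the underlying approach is identical.
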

\begin{proof}
Fix an orderly ranking on the ring of $\Delta$-polynomials $\K\{y\}$. Let  $D\in \Theta$, $p\in N$. Then there exist  $\beta_l\in \K\{y\}$ such that
\begin{equation}\label{eq:1}
D^p \big(y^p\big)=(D(y))^p+\sum_{\theta(l)y<Dy}\beta_l\theta(l)y.
\end{equation} 
Indeed, let $D=\partial_1^{i_1}\ldots\partial_m^{i_m}\in \Theta(r)$.
By the Leibniz rule, for every weight- and degree-homogeneous differential polynomial $z$, the differential polynomial $\partial z$ is homogeneous of degree equal to $\deg z$ and of weight with respect to $\partial$ equal to that of $z$ plus one.
Hence,
\begin{equation}\label{eq:2}
D^p\big(y^p\big)=\sum_{\sum_{k=1}^p l_1^k= pi_1,\ldots,\sum_{k=1}^p l_m^k= pi_m}
c_{?}\partial_1^{l_1^1}\ldots\partial_m^{l_m^1}y\cdot \ldots
\cdot\partial_1^{l_1^p}\ldots\partial_m^{l_m^p}y,
\end{equation}
where $c_?$ are some elements of $\K$. 
Consider a monomial in the right-hand side of~\eqref{eq:2}. Suppose that it is of order greater than $r$ in every differential indeterminate that appears in it.
Then, for each $k$, $1\Le k\Le p$, we have 
 $$l_1^k+\ldots+ l_m^k>r.$$ 
Adding $p$ inequalities, we obtain 
$$pr=p(i_1+\ldots + i_m)=\sum_{k=1}^p\sum_{t=1}^ml_t^k> pr,$$ which is a contradiction. 
Therefore, for each monomial in the right-hand side of~\eqref{eq:2}, one of the factors has order $\Le r$, and we have:
\begin{equation}\label{eq:3}
D^p \big(y^p\big)=
\sum_{{\sum_{k=1}^p l_1^k= pi_1,\ldots,\sum_{k=1}^p l_m^k= pi_m}\atop
 {\sum _{k=1}^m l_k^1\Ge r,\ldots,   \sum _{k=1}^m l_k^p\Ge r}}
c_{?}\partial_1^{l_1^1}\ldots\partial_m^{l_m^1}y\cdot \ldots
\cdot\partial_1^{l_1^p}\ldots\partial_m^{l_m^p}y
+\sum_{l<r}\beta_l\theta(l)y,
\end{equation} 
If, in a monomial from the first sum in~\eqref{eq:3}, at least one of the factors had order greater than $r$, then, as in the above, by adding $p$ inequalities, we would arrive at a contradiction. Thus, we obtain:
\begin{equation}\label{eq:4}
D^p \big(y^p\big)=
\sum_{{\sum_{k=1}^p l_1^k= pi_1,\ldots,\sum_{k=1}^p l_m^k= pi_m}\atop
{ \sum _{k=1}^m l_k^1= r,\ldots,   \sum _{k=1}^m l_k^p= r}}
c_{?}\partial_1^{l_1^1}\ldots\partial_m^{l_m^1}y\cdot \ldots
\cdot\partial_1^{l_1^p}\ldots\partial_m^{l_m^p}y
+\sum_{l<r}\beta_l\theta(l)y,
\end{equation} 
Let the ranking be such that $\partial_1>\ldots>\partial_m$ and, in the first sum in~\eqref{eq:4}, for one of the factors, we have  $l_1^k> i_1$ for all $k$,  $1\Le k\Le p$.
Adding these $p$ inequalities, we obtain $$pi_1=\sum_{k=1}^pl^k_1>pi_1,$$ which gives a contradiction. Thus,
$$
D^p \big(y^p\big)=
\sum_{{{\sum_{k=1}^p l_1^k= pi_1,\ldots,\sum_{k=1}^p l_m^k= pi_m}
\atop
 {\sum _{k=1}^m l_k^1= r,\ldots,   \sum _{k=1}^m l_k^p= r}}
 \atop
{l_1^1=i_1,\ldots,l_1^p=i_1}
}
c_{?}\partial_1^{l_1^1}\ldots\partial_m^{l_m^1}y\cdot \ldots
\cdot\partial_1^{l_1^p}\ldots\partial_m^{l_m^p}y
+\sum_{\theta(l)y<Dy}\beta_l\theta(l)y,
$$ 
As before, note that, for each monomial from the first sum, one cannot have   $l_2^k> i_2$ for all $k$, $1\Le k\Le p$.
Therefore, in this sum, we are left with just  the monomials of order $\Le r$ of the form 
$$\partial_1^{i_1}\partial_2^{i_2}\partial_3^{l_3^1}\ldots \partial_m^{l_m^1}y\cdot
\partial_1^{i_1}\partial_2^{i_2}\partial_3^{l_3^p}\ldots\partial_m^{l_m^p}y ,$$
 moving the rest of the monomials to the other sum.
We now see that, distributing all monomials between these two sums accordingly, we obtain that the first sum contains only one summand and, therefore, obtain~\eqref{eq:1}.

We will prove the statement of the lemma now.  By induction on $k$, we will show that, for all $D\in \Theta$ of order $k$,
$$\sqrt{J'^{(D)}}
 \subseteq \sqrt{J^{(kp+1)}}.$$ 
For every $j'\in J'$, by the definition of $J'$, there exists $p \Ge 1$ such that 
\begin{equation}\label{eq:6}
j'^p=\theta j_0,\quad j_0 \in J,\quad\ord\theta \Le 1.
\end{equation}
Let $k=0$, $D\in \Theta(0)$. By the definition of 
$J'$, $$\sqrt{J'}=J'\subseteq \sqrt{J^{(1)}},$$
and the statement holds.
Consider an orderly ranking $<$  on the set of differential polynomials in one differential indeterminate and 
let, for all $D'<D$, $\ord D=k$, 
$$\sqrt{J'^{(D')}}
 \subseteq \sqrt{J^{(kp+1)}}.$$ 
By~\eqref{eq:1} for $y=j'$,~\eqref{eq:6} implies 
\begin{equation}\label{eq:8}
\big(D\big(j'\big)\big)^p+\sum_{D'< D}\beta_{D'} D'j'=D^p
\big(\theta(1)j_0\big).
\end{equation} 
Since $$D^p\big(\theta(1)j_0 \big)\in  J^{(kp+1)} $$ and, by the inductive hypothesis, a zero of $J^{(kp+1)}$ vanishes 
$D' j'$ in~\eqref{eq:8}, we have
 $$\sqrt{J'^{(D)}}\subseteq \sqrt{J^{(kp+1)}}.\qedhere$$ 
\end{proof}

\begin{lemma}\label{lem:order}
Let $s \Ge 0$,$$ I_0={(F_0)}\subseteq I_1=(F_1)\subseteq ...\subseteq I_s={(F_s)}\subseteq R_l$$
be ideals of $R_l$, $p_j \Ge 0$, $0\Le j\Le s$, and, for all $i$, $1\Le i\Le s$,
$$I_i=\big(I_{i-1}\big)'\quad\text{and}\quad  I_i^{p_i}\subset I_{i-1}^{(1)}.$$ 
Then, for all $q \in \N$, there exists $k$ such that  
$$ I_s^{(q)}\subseteq\sqrt{(F_0)^{(k)}}\quad \text{and}\quad k\Le 1+p_1+p_1\cdot p_2+\ldots+q\cdot p_1\cdot\ldots\cdot p_s.
$$
\end{lemma}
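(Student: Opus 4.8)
The plan is to iterate Lemma~\ref{lem:1} down the chain $I_0\subseteq I_1\subseteq\ldots\subseteq I_s$, peeling off one ideal at a time from the top and bookkeeping the growth of the differentiation order. The hypotheses are tailored exactly to the input of that lemma: since $I_i=(I_{i-1})'$ and $I_i^{p_i}\subseteq I_{i-1}^{(1)}$, I may apply Lemma~\ref{lem:1} with $J=I_{i-1}$ (so that $J'=I_i$) and $p=p_i$, which yields, for every $a\Ge 0$,
$$\sqrt{I_i^{(a)}}\subseteq\sqrt{I_{i-1}^{(ap_i+1)}}.$$
This is the single engine of the whole argument; everything else is the arithmetic of composing these inclusions.

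First I would set up the recursion that records the exponents. Define $q_s=q$ and $q_{i-1}=q_i p_i+1$ for $i=s,s-1,\ldots,1$. A downward induction on $i$, applying the displayed inclusion at each stage, produces the chain
$$I_s^{(q)}\subseteq\sqrt{I_s^{(q)}}\subseteq\sqrt{I_{s-1}^{(q_{s-1})}}\subseteq\ldots\subseteq\sqrt{I_0^{(q_0)}}=\sqrt{(F_0)^{(q_0)}},$$
so that $k=q_0$ is the value we want. The very first inclusion is merely $J\subseteq\sqrt J$, which is what lets us enter the radical chain, and each subsequent step is a verbatim application of Lemma~\ref{lem:1}; the links compose precisely because the exponent $q_i p_i+1$ produced by one step is exactly the exponent $q_{i-1}$ fed into the next.

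It then remains to solve the linear recursion in closed form. Unwinding $q_{i-1}=q_i p_i+1$ from $q_s=q$ gives
$$q_0=1+p_1+p_1p_2+\ldots+p_1\cdots p_{s-1}+q\cdot p_1\cdots p_s,$$
which is exactly the asserted bound, the intermediate ``$\ldots$'' terms being the partial products $p_1\cdots p_j$ for $1\Le j\Le s-1$. Taking $k=q_0$ then finishes the proof.

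The genuinely delicate point is the careful tracking of radicals: Lemma~\ref{lem:1} carries a $\sqrt{\,\cdot\,}$ on both sides, so I cannot remove it at any intermediate stage, which is why the final containment lands in $\sqrt{(F_0)^{(k)}}$ rather than in $(F_0)^{(k)}$ itself. A secondary matter to dispose of is the degenerate case in which some $p_i=0$: there $I_i^{p_i}=R_l\subseteq I_{i-1}^{(1)}$ forces $1\in I_{i-1}^{(1)}$, whence $\sqrt{I_{i-1}^{(1)}}=R_{l+1}$ and the corresponding tail of the chain collapses, consistent with the product $p_1\cdots p_i$ vanishing in the bound; all of the real content lies in the case $p_i\Ge 1$, where Lemma~\ref{lem:1} applies as stated.
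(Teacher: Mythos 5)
Your proposal is correct and follows essentially the same route as the paper: iterate Lemma~\ref{lem:1} down the chain with $J=I_{i-1}$, $J'=I_i$, $p=p_i$, and unwind the recursion $q_{i-1}=q_ip_i+1$ to obtain the stated bound $1+p_1+p_1p_2+\ldots+q\,p_1\cdots p_s$. Your explicit recursion is just a cleaner bookkeeping of the paper's nested expression, and your aside on the degenerate case $p_i=0$ (which Lemma~\ref{lem:1} formally excludes via $p>0$) is a small point the paper's proof glosses over.
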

 \begin{proof}
Let  $g\in I_s^{(q)}$. Then  $g\in I_{s-1}'^{(q)}$.  Set $J=I_{s-1}$. Then
$$J^{(1)}=\big(F_{s-1},\partial F_{s-1}\:\big|\:\partial\in\Delta\big),\quad J'=\sqrt{J^{(1)}}\cap R_l.$$ 
 Applying Lemma~\ref{lem:1} with $k=q$,
we obtain
$$g\in \sqrt{I_{s-1}^{(qp_s+1)}}.$$  
Again, by Lemma~\ref{lem:1} with $k=qp_s+1$ and $J=I_{s-1}$, we have  
$$g\in \sqrt{I_{s-2}^{(p_{s-1}(qp_s+1)+1)}}.$$
 Arguing similarly, we obtain
$$g\in \sqrt{I_{0}^{(1+\ldots(1+(1+p_sq)p_{s-1})\ldots p_1)}}\subseteq 
\sqrt{(F_0)^{(1+\ldots(1+(1+qp_s)p_{s-1})\ldots p_1)}}.\qedhere$$
\end{proof}

For all $F\subset R_h$, we let (see \cite[pages 15-16]{FS} for the recursive definition of $T_h^{m,n}$, which we do not give here, because it is lengthy) \begin{equation}\label{eq:IT}\quad I=\sqrt{(F)},\quad T=T_h^{m,n},\quad 
I_0=\sqrt{ I^{(T)}}\cap R_{T-1}
\end{equation}
and 
\begin{equation}\label{eq:Ik}
I_k=\sqrt{\big( g,\partial g\:|\:g\in 
I_{k-1},\partial\in\Delta\big)}\cap R_{T-1} = \sqrt{I_{k-1}^{(1)}}\cap R_{T-1}.
\end{equation}

\begin{lemma}[{cf. \cite[Proposition~4.1]{FS}}]\label{lem:dimdrop}
If $1\in[F]$, then, for all $k \Ge 1$ such that $I_k\ne R_{T-1}$,  $$\dim I_{k-1}>\dim I_k.$$ 
\end{lemma}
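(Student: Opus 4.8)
The plan is to prove the contrapositive: assuming that for some $k\Ge 1$ we have $I_k\ne R_{T-1}$ and $\dim I_{k-1}=\dim I_k$, I will produce a zero of $F$ in a differentially closed field, which by the weak form of the differential Nullstellensatz forces $1\notin[F]$. I identify a radical ideal $J\subseteq R_l$ with its variety $\V(J)\subseteq\A^{n\alpha_l}$, and write $\pi\colon\A^{n\alpha_T}\to\A^{n\alpha_{T-1}}$ for the projection forgetting the coordinates $\theta y_i$ with $\ord\theta=T$. Two elementary facts underlie~\eqref{eq:Ik}. Since $I_{k-1}\subseteq I_{k-1}^{(1)}$ and $I_{k-1}\subseteq R_{T-1}$ is radical, we get $I_{k-1}\subseteq I_k$, so $\V(I_k)\subseteq\V(I_{k-1})$ and the inequality $\dim I_k\Le\dim I_{k-1}$ holds automatically; the content of the lemma is thus \emph{strictness}. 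Next, because $I_k=\sqrt{I_{k-1}^{(1)}}\cap R_{T-1}$, standard elimination gives $\V(I_k)=\overline{\pi\big(\V(I_{k-1}^{(1)})\big)}$, the Zariski closure of the projection of the first prolongation.

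Now assume $\dim I_{k-1}=\dim I_k=:e$ with $\V(I_k)\ne\varnothing$. From $\V(I_k)\subseteq\V(I_{k-1})$ together with equality of dimensions, I extract a common irreducible component $W$ of both varieties with $\dim W=e$. Using the elimination description, I then lift $W$ to an irreducible component $\widetilde W$ of the first prolongation $\V(I_{k-1}^{(1)})$ with $\overline{\pi(\widetilde W)}=W$; equivalently, the generic point of $W$ admits a prolongation of order one. This is precisely the single step of ``no dimension drop'' that the hypothesis provides.

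The main work---and the step where the precise choice $T=T_h^{m,n}$ is indispensable---is to bootstrap this one-step prolongability into prolongability of $W$ to \emph{all} orders. This is exactly the quantitative content imported from \cite[Proposition~4.1]{FS} and the analysis of \cite{Pierce}: the bound $T_h^{m,n}$ is designed so that, for any variety cut out by the order-$\Le h$ data $F$ and truncated at order $T-1$, the relevant Kolchin dimension polynomial has already stabilized by order $T$. Consequently, stabilization of the ordinary dimension across a single prolong-and-restrict step at this truncation level propagates: the successive prolongations of $W$ continue to project dominantly and never become empty. I expect this to be the main obstacle, as it is not a formal consequence of the preceding geometry but requires unwinding the recursive definition of $T_h^{m,n}$ and matching it against the growth of the dimension polynomial.

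Finally, assembling the resulting coherent tower of generic points over the prolongations of $W$ yields a point compatible at all orders; realizing it inside a differentially closed field $L\supseteq\K$, via the saturation and differential-closure machinery of \cite{FS,Pierce}, produces $(a_1,\dots,a_n)\in L^n$ with $f(a_1,\dots,a_n)=0$ for all $f\in F$, since $W\subseteq\V(I_0)\subseteq\V\big(\sqrt{I^{(T)}}\big)$ already forces $F$ and its derivatives up to order $T$ to vanish there. The weak differential Nullstellensatz then gives $1\notin[F]$, contradicting the hypothesis and completing the contrapositive.
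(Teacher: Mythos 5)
Your overall strategy matches the paper's: find a component common to $I_{k-1}$ and $I_k$ (your $W$, the paper's $Q$), lift it to a component of $\sqrt{I_{k-1}^{(1)}}$ (your $\widetilde W$, the paper's prime $P$ with $Q=P\cap R_{T-1}$), and then invoke the machinery attached to $T=T_h^{m,n}$ to produce a differential point and contradict $1\in[F]$. However, you have located the difficulty in the wrong place, and the step you treat as formal is where the genuine work lies. The hypothesis of \cite[Theorem~4.10]{Pierce} is not ``one-step geometric prolongability'' or stabilization of a dimension polynomial; it is the \emph{differential condition}: the assignments $D_k a_i^\theta = a_i^{\partial_k\cdot\theta}$ must give well-defined derivations on (the fraction field of) the coordinate ring of $W$ extending $\partial_1,\dots,\partial_m$. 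Knowing only that $\overline{\pi(\widetilde W)}=W$, i.e.\ $Q=P\cap R_{T-1}$, does not yield this: well-definedness requires that for every $f\in Q$ one has $\partial_k f\in P$, i.e.\ $Q^{(1)}\subset P$, whereas $P$ is a priori only guaranteed to contain $I_{k-1}^{(1)}$, not the derivatives of the (generally strictly larger) component $Q$. This is not a formal consequence of your setup. The paper proves it using the minimality of $Q$: writing $J$ for the intersection of the other minimal components of $I_{k-1}$ and choosing $g\in J\setminus Q$, one has $hg\in I_{k-1}$ for $h\in Q$, whence $g\cdot\partial h\in\sqrt{I_{k-1}^{(1)}}\subset P$ by \cite[Lemma~1.3, Chapter~I]{Kap}, and $g\notin P$ forces $\partial h\in P$. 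Without this verification your candidate derivations need not be well defined and Pierce's theorem cannot be applied.

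By contrast, the part you single out as ``the main obstacle''---bootstrapping one-step prolongability into prolongability to all orders and realizing the point in a differentially closed extension---is exactly what the paper legitimately outsources to \cite[Theorem~4.10]{Pierce} via the choice of $T$; no unwinding of the recursion defining $T_h^{m,n}$ is needed inside this proof, and the conclusion is reached by noting that $\Quot(R/\{Q\})$ is then a nontrivial differential field extension of $(\K,\Delta)$, contradicting $1\in[F]\subset[I_k]$. So the proposal is correct in outline but is missing the one nontrivial verification (the differential condition via $Q^{(1)}\subset P$) while overstating the difficulty of a step that is a black-box citation.
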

\begin{proof}
Suppose that
\begin{equation}\label{eq:eqdim}
\dim I_k=\dim I_{k-1}
\end{equation} for some $k\Ge 1$. Fix such $k$. Since $I_{k-1}\subset I_k$, by~\eqref{eq:eqdim}, there exists a minimal prime component of $I_k$ that is a minimal prime component of $I_{k-1}$. Pick such a component and denote it by $Q$.
Let $P$ be a prime component of
$\sqrt{I_{k-1}^{(1)}}\subset R_T$ such that 
\begin{equation}\label{eq:PQ}
Q = P\cap R_{T-1},
\end{equation}
which exists by \cite[Proposition 16, Section 2, Chapter II]{Bourbaki:commutativealgebra}.
Let
$$R_T\big/P = \K\left[a_i^\theta \:\Big|\:1\Le i\Le n,\ \ord\theta\Le T\right].$$ Then, by~\eqref{eq:PQ},
$$R_{T-1}\big/Q = \K[b],\quad b:=\left(a_i^\theta\: \Big|\: 
1\Le i\Le n,\ \ord\theta< T\right).$$
We will show that the field $$L=K\left(a_i^\theta\:\Big|\:1\Le i\Le n,\ \ord\theta\Le T\right)$$ 
satisfies the differential condition \cite[page~10]{Pierce}. Indeed, 
it is sufficient to show that, for all $k$, $1\Le k\Le m$, there exists a derivation  $D_k:L'\to L$,
 where $L'=K(b)$, extending $\partial_k$ so that $$D_ka_i^\theta=a_i^{\partial_k\cdot\theta}$$ 
for all $a_i^\theta\in L'$.
For this, it is sufficient to show that, if  $$f\in K\left[x_i^\theta\:\Big|\:
1\Le i\Le n,\ \ord\theta\Le T-1\right]\quad\text{and}\quad f(b)=0,$$ then
\begin{equation}\label{eq:diffvanish}
\sum_{1\Le i\Le n\atop\ord\theta\Le T-1}\frac{\partial f}
{\partial x_i^\theta}(b)a_i^{\partial_k\cdot\theta}+f^{\partial_k}(b)=0
\end{equation}
(here $f^{\partial_k}$ is the polynomial obtained from $f$ by applying $\partial_k$ to its coefficients.)
Note that $Q^{(1)}\subset P$. Indeed, let $J$ be the intersection of all minimal prime components of $I_{k-1}$ not equal to $Q$, $h \in Q$, $\partial\in\Delta$, and $g \in J\setminus Q$ be such that $hg \in I_{k-1}$. By \cite[Lemma~1.3, Chapter~I]{Kap}, $$g\cdot\partial h \in \sqrt{I_{k-1}^{(1)}}\subset P.$$ Since $g \notin Q$ and $g \in R_{T-1}$, $g \notin P$. Hence, $\partial h \in P$.

Finally, since $f(b)=0$, $f\in Q$. Hence, $\partial f\in Q^{(1)}\subset P$,
which implies~\eqref{eq:diffvanish}.
By the choice of $T$ and \cite[Theorem~4.10]{Pierce}, if  $L$ satisfies the differential condition, 
 $\Quot(R/\{Q\})$ is a non-trivial extension of the differential field
 $(\K,\Delta)$, which contradicts $1\in[F]\subset\big[I_{k}\big]$.   
\end{proof}
\subsection{Main result}\label{subsec:main}

\begin{theo}\label{thm:main}
Let  $h,D\Ge 0$, $F\subset R_h$,  $\deg F \Le D$. Then $1\in [F]$ if and only if
there exists $k\Ge 0$ such that $$k \Le \left(n\alpha_{T-1}D\right)^{2^{O\left(n^3\alpha_T^3\right)}}\quad \text{and}\quad
1\in (F)^{(k)},$$
where $\alpha_T=\binom{T+m}{m}$ and $T=T^{m,n}_h$ is recursively defined in~\cite[pages~15-16]{FS}. 
\end{theo}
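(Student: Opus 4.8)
The equivalence has one trivial direction: since $(F)^{(k)}\subseteq[F]$ for every $k$, the existence of a $k$ with $1\in(F)^{(k)}$ immediately gives $1\in[F]$. The whole content is the converse together with the explicit bound, so from now on I assume $1\in[F]$ and keep fixed the notation $I=\sqrt{(F)}$, $T=T^{m,n}_h$, $I_0=\sqrt{I^{(T)}}\cap R_{T-1}$, and $I_k=\sqrt{I_{k-1}^{(1)}}\cap R_{T-1}=(I_{k-1})'$ from~\eqref{eq:IT}--\eqref{eq:Ik}. The plan is to first show that the ascending chain $I_0\subseteq I_1\subseteq\cdots$ reaches the unit ideal $R_{T-1}$ after a controlled number of steps, and then to translate a witness of $1\in I_s=R_{T-1}$ back into a witness of $1\in(F)^{(k)}$ with $k$ bounded as claimed.

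For the first part I would invoke Lemma~\ref{lem:dimdrop}: as long as $I_k\ne R_{T-1}$ the dimension drops strictly, $\dim I_{k-1}>\dim I_k$. Since $R_{T-1}$ is a polynomial ring in $n\alpha_{T-1}$ variables (the $\theta y_i$ with $\ord\theta\le T-1$), one has $\dim I_0\le n\alpha_{T-1}$, and the dimension can drop strictly at most $n\alpha_{T-1}+1$ times before the ideal becomes all of $R_{T-1}$. Hence there is an index
\[
s\le n\alpha_{T-1}+1\qquad\text{with}\qquad I_s=R_{T-1},\ \text{so that}\ 1\in I_s .
\]
This is exactly where the model-theoretic choice of $T=T^{m,n}_h$ through \cite{FS,Pierce} is essential: it is what makes the hypothesis of Lemma~\ref{lem:dimdrop} available, and hence what forces the chain to terminate at all.

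The second part is where the explicit bound is produced. The chain $I_0\subseteq\cdots\subseteq I_s$ is of exactly the form required by Lemma~\ref{lem:order}: one has $I_i=(I_{i-1})'$ by construction, and since $I_i\subseteq\sqrt{I_{i-1}^{(1)}}$ with $R_T$ Noetherian there are exponents $p_i$ with $I_i^{p_i}\subseteq I_{i-1}^{(1)}$. Applying Lemma~\ref{lem:order} with $q=0$ yields $I_s\subseteq\sqrt{(F_0)^{(k_0)}}$, where $F_0$ generates $I_0$ and $k_0\le 1+p_1+p_1p_2+\cdots+p_1\cdots p_{s-1}$; since $1\in I_s$, this forces $1\in(F_0)^{(k_0)}$. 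One further descent is then needed, from the generators $F_0$ of $I_0=\sqrt{I^{(T)}}\cap R_{T-1}$ back to $F$ itself: the two radicals hidden in $I=\sqrt{(F)}$ and in $\sqrt{I^{(T)}}$ are peeled off by repeated application of Lemma~\ref{lem:1}, together with the Noetherian bound $\sqrt{(F)}^{\,p}\subseteq(F)\subseteq(F)^{(1)}$, converting $1\in(F_0)^{(k_0)}$ into $1\in\sqrt{(F)^{(k)}}$ and hence into $1\in(F)^{(k)}$, at the cost of multiplying $k_0$ by the corresponding power.

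It remains to estimate $k$, and this is the real obstacle. Every $p_i$, as well as the degrees of the generators of the successive radicals $\sqrt{I_{i-1}^{(1)}}$, must be bounded by the effective polynomial Nullstellensatz \cite{Kollar,Jelonek}: in $N:=n\alpha_T$ variables an ideal generated in degree $\le\delta$ has radical generated in degree $\le\delta^{O(N)}$ and uniform radical-membership exponent $\le\delta^{O(N)}$. Writing $\delta_i$ for the degree of the generators of $I_i$ and using that derivation preserves total degree, this gives the recursion $\delta_i\le\delta_{i-1}^{O(N)}$ and $p_i\le\delta_{i-1}^{O(N)}$, starting from $\delta_0\le D^{O(N^2)}$ (two radicals applied to $(F)$, whose generators have degree $\le D$). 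Iterating along the chain of length $s=O(n\alpha_{T-1})$ makes the degrees, and hence the product $p_1\cdots p_s$ and the final $k$, grow doubly exponentially, producing a bound of the shape
\[
k\le\left(n\alpha_{T-1}D\right)^{2^{O\left(n^3\alpha_T^3\right)}} .
\]
The delicate points I expect to fight with are keeping the Nullstellensatz degree estimates uniform across the whole chain, checking that the radicals introduced in $I^{(T)}$ and in each $I_k$ interact correctly with differentiation so that Lemma~\ref{lem:1} genuinely applies at every descent, and verifying that the crude exponent $n^3\alpha_T^3$ indeed dominates the iterated bound $\bigl(O(N)\bigr)^{s}=2^{O(N\log N)}$ with $N=n\alpha_T$ and $s=O(n\alpha_{T-1})$.
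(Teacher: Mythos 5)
Your proposal follows essentially the same route as the paper: Lemma~\ref{lem:dimdrop} forces the chain $I_0\subseteq I_1\subseteq\cdots$ to reach $R_{T-1}$ within $a=n\alpha_{T-1}$ steps, Lemmas~\ref{lem:order} and~\ref{lem:1} convert $1\in I_a$ into $1\in(F)^{(k)}$, and the effective polynomial Nullstellensatz supplies the exponents $p_i$ and generator-degree bounds that are iterated along the chain. The only imprecision is your claim that the radical of an ideal generated in degree $\le\delta$ in $N$ variables is generated in degree $\delta^{O(N)}$ --- the bound the paper actually uses, from \cite[Proposition~4]{DJS}, is of the doubly exponential form $\left(N\delta^{s}\right)^{2^{O(sN)}}$ --- but since even this larger per-step bound iterates (via the paper's recursion $d_{i+1}=a^{2^c}d_i^{b2^c}$) to the same final exponent $2^{O\left(n^3\alpha_T^3\right)}$, your argument and bound are unaffected.
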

\begin{proof}
If $1 \in (F)^{(k)}$, then $1 \in [F]$ by definition. We will now show the reverse implication.
Let $s = \dim Z(F)$ and also
\begin{equation}\label{eq:abc}
a := n\alpha_{T-1},\quad b := n\alpha_T,\quad c := O\left(n^2\alpha_{T-1}^2\right).
\end{equation}
 Then $$\deg Z(F)=\deg Z(I) \Le D^s\Le D^{n\alpha_h}.$$ Hence, by \cite[Proposition~4]{DJS}, the ideal $I$ (as well as the ideal $I^{(T)}$) can be generated by polynomials of degree at most 
$$
\left(n\alpha_hD^s\right)^{2^{O\left(sn\alpha_h\right)}}\Le \left(n\alpha_h\right)^{2^{O\left(n^2\alpha_h^2\right)}}D^{n\alpha_h\cdot 2^{O\left(n^2\alpha_h^2\right)}}=\left(n\alpha_hD\right)^{2^{O\left(n^2\alpha_h^2\right)}}=:d_F.
$$
Then $$
\deg Z(I_0) \Le   d_F^{n\alpha_T}= \left(n\alpha_hD\right)^{b 2^{O\left(n^2\alpha_h^2\right)}}=: D_0
$$
and the ideal $I_0$ can be generated by polynomials of degrees at most
$$
\left(aD_0\right)^{2^c}=
\left(a\right)^{2^c}\left(n\alpha_hD\right)^{b2^{O\left(n^2\alpha_h^2\right)+c}} =:d_0.
$$

Moreover, by~\cite[Theorem~1.3]{Jelonek},
$$
\sqrt{I^{(T)}}^{p_0}\subset I^{(T)},\quad p_0 := d_F^b.
$$
Hence, $$I_0^{p_0}\subset I^{(T)}\cap R_{T-1}.$$
Continuing this way, we obtain that
$$
\deg Z(I_{i+1}) \Le d_i^{n\alpha_T} =: D_{i+1}
$$
and the ideal $I_{i+1}$ can be generated by polynomials of degrees at most
$$
d_{i+1} = a^{2^c}(d_i)^{b2^c}=a^{2^c}\left(a^{2^c}d_{i-1}^{b2^c}\right)^{b2^c}=a^{2^c+b2^{2c}}d_{i-1}^{b^22^{2c}}=a^{2^c+b2^{2c}}\left(a^{2^c}d_{i-2}^{b2^c}\right)^{b^22^{2c}}=a^{2^c+b2^{2c}+b^22^{3c}}d_{i-2}^{b^32^{3c}}.
$$
Therefore,
$$
d_{i+1} = a^{2^c\sum\limits_{j=0}^q \left(b2^c\right)^j}d_{i-q}^{\left(b2^c\right)^{q+1}} =a^{2^c\sum\limits_{j=0}^i \left(b2^c\right)^j}d_0^{\left(b2^c\right)^{i+1}}=a^{2^c\frac{\left(b2^c\right)^{i+1}-1}{b2^c-1}}d_0^{\left(b2^c\right)^{i+1}}\Le \left(a^{2^c}d_0\right)^{\left(b2^c\right)^{i+1}}
$$
and
$$
D_{i+1} \Le \left(a^{2^c}d_0\right)^{b\left(b2^c\right)^i}.
$$
Again, by~\cite[Theorem~1.3]{Jelonek}, 
$$
\sqrt{I_i^{(1)}}^{p_{i+1}}\subset I_i^{(1)},\quad p_{i+1} := \left(a^{2^c}d_0\right)^{b\left(b2^c\right)^i} \Ge d_i^b,\quad i\Ge 0.
$$
Hence,
$$
I_{i+1}^{p_{i+1}} \subset I_i^{(1)}\cap R_{T-1}.
$$
By Lemma~\ref{lem:dimdrop}, $1 \in I_a$. By Lemma~\ref{lem:order} applied to~\eqref{eq:Ik}, 
$$
1 \in I_0^{\left(1+p_1+p_1\cdot p_2+\ldots+p_1\cdot\ldots\cdot p_a\right)}.
$$
Again by Lemma~\ref{lem:order}, for all $q \Ge 0$,
$$
I_0^{(q)} \subset \sqrt{I^{\left(T+1+qp_0\right)}}.
$$
Hence,
$$
1 \in I^{\left(T+1+p_0\left(1+p_1+p_1\cdot p_2+\ldots+p_1\cdot\ldots\cdot p_a\right)\right)} .
$$ 
By Lemma~\ref{lem:1} again, we obtain
\begin{equation}\label{eq:p0Tpa}
1 \in (F)^{\left(p_0\left(T+1+p_0\left(1+p_1+p_1\cdot p_2+\ldots+p_1\cdot\ldots\cdot p_a\right)\right)\right)}=(F)^{\left(p_0\left(T+1+\ldots(1+(1+p_a)p_{a-1})\ldots p_0\right)\right)}=(F)^{(p_0(T+p_0\cdot\ldots\cdot p_a))},
\end{equation}
with the latter equality following from the definition of $c$ via the $O$-symbol.
Note that
\begin{align*}
p_0^2p_1\cdot\ldots\cdot p_a &= d_F^{2b}\left(a^{2^c}d_0\right)^{b \sum\limits_{j=0}^{a-1}\left(b2^c\right)^j}=d_F^{2b}\left(a^{2^c}d_0\right)^{b \left(\frac{\left(b2^c\right)^a-1}{b2^c-1}-1\right)}\\&\Le d_F^{2b}\left(a^{2^c}d_0\right)^{b\left(b2^c\right)^a}= d_F^{2b}\left(a^2d_F^b\right)^{\left(b2^c\right)^{a+1}}\\
&\Le\left(ad_F\right)^{b\left(\left(b2^c\right)^{a+1}+2\right)} =\left(ad_F\right)^{2^{cb}}=a^{2^{cb}}\left(n\alpha_hD\right)^{2^{O\left(n^2\alpha_h^2\right)+cb}}\\
&=a^{2^{cb}}\left(n\alpha_hD\right)^{2^{cb}}=(aD)^{2^{cb}},
\end{align*}
(again, the equalities hold because of the $O$-definition of $c$ and also because $\alpha_T \Ge 1$ if $h\Ge 1$ and $k=a=0$ if $h=0$)
and the result now follows by substituting~\eqref{eq:abc} in the above and using~\eqref{eq:p0Tpa}.
\end{proof}
\begin{cor}(cf. \cite[Corollary~21]{DJS}) Let  $h,D\Ge 0$, $F\subset R_h$, $f \in R_h$, and  $\max\{\deg f, \deg F\} \Le D$.
Then $f \in \sqrt{[F]}$ if and only if there exists $k\Ge 0$ such that $$k \Le \left(n\alpha_{T'-1}D\right)^{2^{O\left(n^3\alpha_{T'}^3\right)}}\quad \text{and}\quad
f\in \sqrt{(F)^{(k)}},$$
where $T' := T_h^{m,n+1}$.
\end{cor}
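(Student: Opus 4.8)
The plan is to derive this strong Nullstellensatz from the weak one (Theorem~\ref{thm:main}) by a differential Rabinowitsch trick that I carry out at each finite differentiation level, so that the quantitative bound survives. I introduce one new differential indeterminate $y_{n+1}$, work in $\widetilde R:=\K\{y_1,\ldots,y_{n+1}\}$, and set
$$\widetilde F:=F\cup\{1-y_{n+1}f\}\subset\widetilde R_h.$$
Since $f\in R_h$, the polynomial $1-y_{n+1}f$ has order $\Le h$ and degree $\Le D+1$, so $\widetilde F$ is a system in $n+1$ indeterminates of order $\Le h$ and degree $\Le D+1$. The core of the argument is the finite-level equivalence
\begin{equation}\label{eq:finiterab}
f\in\sqrt{(F)^{(k)}}\ \text{in}\ R_{h+k}\quad\Longleftrightarrow\quad 1\in\big(\widetilde F\big)^{(k)}\ \text{in}\ \widetilde R_{h+k},\qquad k\Ge 0.
\end{equation}

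The easy implication ``$\Rightarrow$'' in~\eqref{eq:finiterab} is the classical power trick. If $f^N\in(F)^{(k)}$, then $f^N\in\big(\widetilde F\big)^{(k)}$ because the generators $\theta g$ ($g\in F$, $\ord\theta\Le k$) of $(F)^{(k)}$ are among those of $\big(\widetilde F\big)^{(k)}$; since $1-y_{n+1}f\in\big(\widetilde F\big)^{(k)}$, the identity $1-(y_{n+1}f)^N=(1-y_{n+1}f)\sum_{j=0}^{N-1}(y_{n+1}f)^j$ puts $1-y_{n+1}^Nf^N$ in $\big(\widetilde F\big)^{(k)}$, and adding $y_{n+1}^Nf^N\in\big(\widetilde F\big)^{(k)}$ gives $1\in\big(\widetilde F\big)^{(k)}$.

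I expect the reverse implication ``$\Leftarrow$'' to be the main obstacle. Here I would substitute $y_{n+1}=1/f$ \emph{differentially}: let $\varphi\colon\widetilde R\to R_f$ be the unique differential homomorphism into the localization $R_f$ (with its uniquely extended derivations) fixing $R$ and sending $y_{n+1}\mapsto 1/f$. Being differential, $\varphi$ satisfies $\varphi(\theta y_{n+1})=\theta(1/f)$ and, crucially, $\varphi\big(\theta(1-y_{n+1}f)\big)=\theta\big(1-(1/f)f\big)=\theta(0)=0$ for every $\theta\in\Theta$, so all generators of $\big(\widetilde F\big)^{(k)}$ coming from the Rabinowitsch polynomial vanish, while $\varphi(\theta g)=\theta g$ for $g\in F$. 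Applying $\varphi$ to a witness $1=\sum_i c_i\cdot(\mathrm{generator}_i)$ of $1\in\big(\widetilde F\big)^{(k)}$ yields $1=\sum_{g\in F,\ \ord\theta\Le k}\varphi(c_i)\,\theta g$ in $R_f$, and after clearing denominators, $f^M=\sum\psi\,\theta g$ for some $\psi\in R$ and $M\Ge 0$. The delicate point is the order bookkeeping: a coefficient $c_i$ may involve $\theta y_{n+1}$ up to $\ord\theta\Le h+k$, so $\varphi(c_i)$ and hence $\psi$ a priori lie only in $R_{2h+k}$, not in $R_{h+k}$. Since, however, $f^M\in R_{h+k}$ and the generators $\theta g$ lie in $R_{h+k}$, contracting the ideal $(F)^{(k)}R_{2h+k}$ back along the retraction $R_{2h+k}\to R_{h+k}$ that kills the auxiliary variables $\theta y_i$ of order exceeding $h+k$ shows $f^M\in(F)^{(k)}$, i.e.\ $f\in\sqrt{(F)^{(k)}}$. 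Checking that $\varphi$ is a well-defined differential homomorphism and that this contraction is legitimate is the step requiring the most care.

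Finally I would assemble the bound. Passing to the increasing union over $k$ in~\eqref{eq:finiterab}, together with $\sqrt{[F]}=\bigcup_k\sqrt{(F)^{(k)}}$ and $[\widetilde F]=\bigcup_k\big(\widetilde F\big)^{(k)}$, gives $f\in\sqrt{[F]}\Leftrightarrow 1\in[\widetilde F]$. Applying Theorem~\ref{thm:main} to $\widetilde F$, a system in $n+1$ indeterminates of order $h$ and degree $\Le D+1$ whose relevant parameter is $T'=T_h^{m,n+1}$, yields that $1\in[\widetilde F]$ iff there is $k\Ge 0$ with $1\in\big(\widetilde F\big)^{(k)}$ and
$$k\Le\big((n+1)\alpha_{T'-1}(D+1)\big)^{2^{O\left((n+1)^3\alpha_{T'}^3\right)}}.$$
Translating $1\in\big(\widetilde F\big)^{(k)}$ back to $f\in\sqrt{(F)^{(k)}}$ for the same $k$ via~\eqref{eq:finiterab}, and absorbing the constant factors ($n+1=O(n)$ and $D+1\Le 2D$) into the $O$-constants of the double exponent exactly as in the proof of Theorem~\ref{thm:main}, simplifies the bound to $\big(n\alpha_{T'-1}D\big)^{2^{O\left(n^3\alpha_{T'}^3\right)}}$, which is the claimed estimate.
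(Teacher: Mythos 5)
Your proof is correct and follows essentially the same route as the paper's: the differential Rabinowitsch trick with one new indeterminate, an application of Theorem~\ref{thm:main} to the augmented system (whence the parameter $T'=T_h^{m,n+1}$), and back-substitution of $1/f$ with clearing of denominators, absorbing the shifts $n\mapsto n+1$ and $D\mapsto D+1$ into the $O$-constants. Your more careful order bookkeeping for the substitution step (landing in $R_{2h+k}$ and contracting back to $R_{h+k}$) is a correct elaboration of what the paper dismisses with ``as usual.''
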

\begin{proof} If $f\in \sqrt{(F)^{(k)}}$, then $f \in \sqrt{[F]}$ by definition. Let $f \in \sqrt{[F]}$. Then $1 \in[1-tf,F] \subset \K\{y_1\ldots,y_n,t\}$. By Theorem~\ref{thm:main}, $$1 \in \left(\left(1-tf\right)^{(k)},F^{(k)}\right),$$
for which we used the properties of $O$ to go down from $D+1$ (which appears because $\deg tf = \deg f +1$) to $D$ and from $n+1$ to $n$ outside of $T'$. As usual, by substituting $1/f$ into $t$ and clearing out the denominators, we obtain the result.
\end{proof}
\begin{rem}Note that, for $m\Ge 2$, $T \ne O(T')$ (see Section~\ref{sec:concrete}), and so we do not replace $T'$ by $T$ in the corollary. However, for $m=1$, we simply have $T='T=h$.
\end{rem}
\subsection{Concrete values of the number of derivations}\label{sec:concrete}
According to~\cite[page~16]{FS}, if $m=1$, then $T=h$. Then the bound from Theorem~\ref{thm:main} is $$(nhD)^{2^{O\left(n^3(h+1)^3\right)}}$$ and is better than the bound from~\cite[Corollary~19]{DJS}. Indeed,  our result holds for non-constant coefficients and also, if $h=0$, then our bound naturally gives $0$, but the bound from~\cite[Corollary~19]{DJS} gives $(nD)^{2^{O\left(n^3\right)}}$. Of course, one can just prove directly that, if $h=0$, then $k=0$.
If $m=2$, by \cite[Lemma~3.8]{FS}, 
$$
T=T(n,h)=2^{b_{n,h}+1}h,\ \ b_{0,h} = 0,\ b_{i+1,h} = 2^{b_i+1}h+b_i+1.
$$
Therefore, in this case, the bound from Theorem~\ref{thm:main} is polynomial in $D$ (as it is for arbitrary $m$, $n$, and $h$) and is iterated-exponential in $h$, with the length of the tower being equal to $n+2$. 

For comparison, note that the bound from~\cite[Theorem~1]{DiffNull}, $A(m+8,\max(n,h,d))$, has a substantially higher growth rate, as, for example, $A(3,x)$ is exponential in $x$ and $A(4,x)$ is a tower of exponentials of length $x+3$, and the minimal possible value here, $A(9,1)$, is out of reach for any existing computer even to output. 
\section{Lower bound}\label{sec:lowerbound}
The examples in~\cite{DiffNull} show that the lower bound for the effective differential Nullstellensatz is exponential in the number of variables and the number of derivations and polynomial in the degree of the system.  We expand on these results, first by observing how the order of the system affects the lower bound.

\begin{ex}
Consider the system $F = \left\{y_1^d,y_1-y_2^d,\ldots,y_{n-1}-y_n^d,1-y_n^{(h)}\right\} \subset K\{y_1,\ldots,y_n\} =: R$ with one derivation.  A particular and essential case of this, $h=1$, was considered in an unpublished manuscript by York Kitajima, and the argument in the present example is based on Kitajima's argument and extends it, with extra subtleties. Recall that for $s \Ge 2$ and $m,m_1,\ldots, m_s \in \N$ with $m_1 + \ldots + m_s = m$, the multinomial coefficient is
$$
{m \choose m_1,\ldots,m_s} = \frac{m!}{m_1!\cdot \ldots \cdot m_s!}.
$$
For $l \Ge 1$, denote by $M_l$ the multinomial coefficient
$$
M_l = {d^lh \choose d^{l-1}h,\dots,d^{l-1}h},
$$
where this multinomial coefficient contains $d$ terms.  We claim that $(F)^{(j)} \subset I_j$ where
\begin{align*}
I_0 &= \left(y_1,y_2,\ldots,y_{n-1},y_n,1-y_n^{(h)}\right) \\
 I_j &= \left(I_{j-1},y_1^{(j)},y_2^{(j)},\ldots,y_{n-1}^{(j)},y_n^{(j)},y_n^{(h+j)}\right) \hskip3.1cm 1 \Le j \Le h-1 \\
I_j &= \left(I_{j-1},y_1^{(j)},\ldots,y_{n-i}^{(j)} - \prod_{l = 1}^i M_l^{d^{i-l}},y_{n-i+1}^{(j)},\ldots, y_{n-1}^{(j)},y_n^{(h+j)}\right) \hskip.5cm j = d^ih, 1 \Le i \Le n-1 \\
I_j &= \left(I_{j-1},y_1^{(j)},\ldots,y_{n-1}^{(j)},y_n^{(h+j)}\right) \hskip4.35cm \text{otherwise, } j \le d^nh-1.
\end{align*}
Indeed, we can show this by induction on $j$.  The base case $j = 0$ is clear.  Now assume $(F)^{(k)} \subset I_k$ for all $k < j$.  By induction, we only need to show the inclusion of unmixed monomials, i.e. powers of a single derivative of a $y_i$.  The generalized Leibniz rule says that for all $s \Ge 1$, $m\Ge 0$, and $f_1,\ldots,f_s \in R$,
$$
\left(\prod_{r = 1}^s f_r\right)^{(m)} = \sum_{m_1 + \ldots + m_s = m}{m \choose m_1,\ldots,m_s} \prod_{r = 1}^s f_r^{(m_r)}
$$
In our system $F$, we have $s = d$.  Unmixed monomials thus occur when $m_1 = \ldots = m_d = m/d$.  When $j \neq d^ih$, $y_i^{(j/d)} \in I_j$ for all $i$, $1 \Le i \Le n$, so there is nothing to prove.  The case we must consider is when $j = d^ih$, in which case each $m_\alpha$ in the multinomial coefficient is $d^{i-1}h$ and $y_{n-i}^{(d^ih)} \notin I_{d^ih}$.

The only differential polynomial in $(F)^{(d^ih)}$ for which it remains to show that it is in $I_{d^ih}$ is $\left(y_{n-i} - y_{n-i+1}^d\right)^{(d^ih)}$, since $y_{n-i+1}^{(d^{i-1}h)} \notin I_{d^{i-1}h}$ by construction.  Observe that
$$
\left(y_{n-i} - y_{n-i+1}^d\right)^{(d^ih)} = y_{n-i}^{(d^ih)} - M_i\left(y_{n-i+1}^{(d^{i-1}h)}\right)^d + g,
$$
where $g \in K\{y\}$ contains no unmixed monomials, and so is in $I_{d^ih}$.  Thus, it suffices to show that
$$
y_{n-i}^{(d^ih)} - M_i\left(y_{n-i+1}^{(d^{i-1}h)}\right)^d \in I_{d^ih}.
$$
By construction, 
$$
y_{n-i+1}^{(d^{i-1}h)} - \prod_{l = 1}^{i-1}M_l^{d^{i-l-1}} \in I_{d^{i-1}h} \subset I_{d^ih}.
$$
We can thus write
\begin{multline*}
y_{n-i}^{(d^ih)} - M_i\left(y_{n-i+1}^{(d^{i-1}h)}\right)^d = \\
\left(y_{n-i}^{(d^ih)} - \prod_{l = 1}^i M_l^{d^{i-l}}\right) - M_i \sum_{\alpha = 0}^{d-1} \left[\left(\prod_{l = 1}^{i-1} M_l^{d^{i-l-1}}\right)^\alpha\left(y_{n-i+1}^{(d^{i-1}h)}\right)^{d-1-\alpha}\right]\left(y_{n-i+1}^{(d^{i-1}h)} - \prod_{l = 1}^{i-1}M_l^{d^{i-l-1}}\right)
\end{multline*}
in terms of elements of $I_{d^ih}$, completing the induction step and proving that $F^{(j)} \subset I_j$ for all $j$, $1 \Le j \Le d^nh - 1$.

Since $1 \notin I_j$ for all $j$, $0\Le j\Le d^nh-1$, then $1 \notin (F)^{(d^nh-1)}$.  Observe that
\begin{align*}
\left(y_n^{d^n}\right)^{(d^nh)} &= \left(\left(y_n^{d^n}\right)^{(h)}\right)^{((d^n-1)h)} = \left(\left(\sum_{n_{1,1} + \ldots + n_{1,d^n} = h} {h \choose n_{1,1},\ldots,n_{1,d^n}} \prod_{i = 1}^{d^n} y_n^{(n_{1,i})}\right)^{(h)}\right)^{((d^n-2)h)}\\
&= \left(\left(\sum_{n_{1,1} + \ldots + n_{1,d^n} = h}\sum_{n_{2,1} + \ldots + n_{2,d^n} = h} {h \choose n_{1,1},\ldots,n_{1,d^n}}{h \choose n_{2,1},\ldots,n_{2,d^n}} \prod_{i = 1}^{d^n} y_n^{(n_{1,i} + n_{2,i})}\right)^{(h)}\right)^{((d^n-3)h)}\\
&= \ldots = \sum_{n_{1,1} + \ldots + n_{1,d^n} = h} \ldots \sum_{n_{d^n,1} + \ldots + n_{d^n,d^n} = h} \left( \prod_{j = 1}^{d^n} {h \choose n_{j,1},\ldots,n_{j,d^n}}\prod_{i = 1}^{d^n} y_n^{(n_{1,i} + \ldots + n_{d^n,i})}\right).
\end{align*}
Since $y_n^{(h)} \equiv 1$ modulo the system $F$, then $y_n^{(l)} \equiv 0$ for all $l > h$, so the only non-zero terms in this sum will be powers of $y_n^{(h)}$.  We thus have that, modulo $F$, $\left(y_n^{d^n}\right)^{(d^nh)} \equiv 1$, so $1 \in (F)^{(d^nh)}$ and $1 \notin (F)^{(d^nh-1)}$.
\end{ex}

\begin{ex}\label{ex:42}
Consider the following collections of differential polynomials in $K\{y_1,\ldots,y_n\}$ with derivatives $\Delta = \{\partial_1,\ldots,\partial_m\}$, with $d,h \Ge 1$:
\begin{align*}
G_1 &= \left\{(\partial_1y_1)^d,\partial_1y_1 - (\partial_2y_1)^d,\ldots,\partial_{m-1}y_1-(\partial_my_1)^d\right\} \\
G_i &= \left\{\partial_my_{i-1}-(\partial_1y_i)^d,\partial_1y_i - (\partial_2y_i)^d,\ldots,\partial_{m-1}y_i-(\partial_my_i)^d\right\} \hskip.5cm 1 \Le i \Le n-1 \\
G_n &= \left\{\partial_my_{n-1}-(\partial_1y_n)^d,\partial_1y_n-(\partial_2y_n)^d,\ldots,\partial_{m-1}y_n-(\partial_my_n)^d,1-\partial_m^{h+1}y_n\right\}.
\end{align*}
Similar to what is done in~\cite{DiffNull}, if we replace $F$ in the previous example by $G = \bigcup_{i = 1}^n G_i$, then the elements of $G$ will need to be differentiated a minimum of $d^{mn}h$ times in order to reduce the system to 1, so $1 \in (G)^{(d^{mn}h)}$ and $1 \notin (G)^{(d^{mn}h-1)}$.
\end{ex}

In these examples, we see that that the lower bound for having $f \in (G)^{(k)}$ is exponential in the number of derivations and number of variables and linear in the order of the system.  However, these known examples are non-linear.  We will use the lower bound on the effective polynomial Nullstellensatz to construct an example of a linear system $G \subset K\{y_1,\ldots,y_n\}$ with $f \in (G)^{(k)}$ but $f \notin (G)^{(k-1)}$, where $k$ is exponential in the number of derivations and the number of variables and polynomial in the order of the system.

We use a system of polynomials to construct a system of differential polynomials.  We begin with polynomials in $K[X_1,\ldots,X_m]$ and construct differential polynomials in $K\{y\}$ with derivations $\Delta = \{\partial_1,\ldots,\partial_m\}$, where $K$ is constant with respect to each $\partial_i$.  Given $\alpha = (\alpha_1,\ldots,\alpha_m) \in \N^m$, denote $X^\alpha = X_1^{\alpha_1}\cdot\ldots\cdot X_m^{\alpha_m}$ and $\partial^\alpha = \partial_1^{\alpha_1}\cdot\ldots\cdot \partial_m^{\alpha_m}$.

Suppose we have $f_1,\ldots,f_r \in k[X_1,\ldots,X_m]$.  For each $i$, $1 \Le i \Le r$, there exist $\alpha_{i,1},\ldots,\alpha_{i,N_i} \in \N^m$ and $c_{i,1},\ldots,c_{i,N_i} \in K$ such that
$$
f_i = \sum_{j = 1}^{N_i} c_{i,j} X^{\alpha_{i,j}}.
$$
We then define $\tilde{f}_i \in K\{y\}$ to be
$$
\tilde{f}_i = \sum_{j = 1}^{N_i} c_{i,j} \partial^{\alpha_{i,j}}y.
$$
Similarly, given $\displaystyle f = \sum_{j = 1}^N s_jX^{\gamma_j} \in K[X_1,\ldots,X_m]$, we can define $\displaystyle \tilde{f} = \sum_{j = 1}^N s_j \partial^{\gamma_j}y \in K\{y\}$.  Consider the system $G = \{\tilde{f}_1,\ldots,\tilde{f}_r\}$.

\begin{theo}\label{theo:polytodiff}
Let $f,f_1,\ldots,f_r \in K[X_1,\ldots,X_m]$, $\tilde{f},\tilde{f}_1,\ldots,\tilde{f}_r \in K\{y\}$, and $G \subset K\{y\}$ be defined as above.  Suppose $f \in (f_1,\ldots,f_r)$ and let $k$ be the lower bound for the degree of the coefficients of the $f_i$ in any possible representation of $f$.  Then $\tilde{f} \in (G)^{(k)}$ but $\tilde{f} \notin (G)^{(k-1)}$.
\end{theo}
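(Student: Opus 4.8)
The plan is to exploit the fact that the construction $f \mapsto \tilde{f}$ is precisely a linearization that turns multiplication by $X_i$ into differentiation by $\partial_i$, thereby reducing the differential-ideal membership question to a purely linear-algebraic one. First I would set up the dictionary. Writing $R = K\{y\} = K[u_\alpha : \alpha \in \N^m]$ with $u_\alpha := \partial^\alpha y$, the $K$-span of the $u_\alpha$ is exactly the space $V$ of differential polynomials that are homogeneous of degree $1$, and the assignment $u_\alpha \mapsto X^\alpha$ extends to a $K$-linear isomorphism between $V$ and $K[X_1,\ldots,X_m]$ sending $\tilde{f}_i \mapsto f_i$ and $\tilde{f} \mapsto f$. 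Because $K$ is constant, $\partial_i u_\alpha = u_{\alpha + e_i}$, so under this isomorphism the action of $\partial^\beta$ on $V$ corresponds exactly to multiplication by $X^\beta$ on $K[X_1,\ldots,X_m]$; in particular $\partial^\beta \tilde{f}_i$ corresponds to $X^\beta f_i$, and $\ord\big(\partial^\beta\big) = |\beta|$.

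The crux is a general lemma about polynomial rings: if $\ell_0$ is a linear (degree-$1$ homogeneous) form lying in the ideal generated by linear forms $\ell_1,\ldots,\ell_s$, then $\ell_0$ is already a $K$-linear combination of the $\ell_j$. I would prove this by writing $\ell_0 = \sum_j P_j \ell_j$ and extracting the degree-$1$ homogeneous component of both sides: since each $\ell_j$ is homogeneous of degree $1$, only the constant terms of the $P_j$ survive, giving $\ell_0 = \sum_j (P_j)_0\,\ell_j$ with $(P_j)_0 \in K$. This is the step that collapses the ideal-theoretic condition to a span condition, and it is the conceptual heart of the argument; everything else is bookkeeping through the dictionary.

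With these two ingredients in place I would finish as follows. By the definition of $(G)^{(k)}$ as the ordinary ideal $\big(\theta g : g \in G,\ \ord\theta \Le k\big)$, its generators are exactly the linear forms $\{\partial^\beta \tilde{f}_i : |\beta| \Le k,\ 1\Le i \Le r\}$. By the lemma, $\tilde{f} \in (G)^{(k)}$ if and only if $\tilde{f} \in \Span_K\{\partial^\beta \tilde{f}_i : |\beta| \Le k\}$, which under the isomorphism is equivalent to $f \in \Span_K\{X^\beta f_i : |\beta| \Le k\}$, i.e.\ to the existence of a representation $f = \sum_i g_i f_i$ with $\deg g_i \Le k$ for all $i$. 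Since $k$ is by definition the minimum over all representations of $\max_i \deg g_i$, such a representation exists for the bound $k$ but not for the bound $k-1$; this yields $\tilde{f} \in (G)^{(k)}$ and $\tilde{f} \notin (G)^{(k-1)}$, as claimed. The main obstacle I anticipate is not difficulty but vigilance: one must check that $(G)^{(k)}$ is the ordinary (not differential) ideal, that its generating set is precisely the derivatives $\partial^\beta \tilde{f}_i$ of order $\Le k$, and that the linear-form lemma is applied in the full polynomial ring $R$ in the infinitely many variables $u_\alpha$ rather than in some truncation $R_l$.
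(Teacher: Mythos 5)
Your proof is correct and follows the same overall route as the paper's: translate polynomial multiplication into differentiation via the dictionary $X^\beta f_i \leftrightarrow \partial^\beta \tilde{f}_i$ (valid because $K$ is constant), and reduce membership of the degree-one form $\tilde{f}$ in $(G)^{(l)}$ to membership in the $K$-span of the generators $\partial^\beta\tilde{f}_i$, $\ord\partial^\beta\Le l$. The one place where you genuinely diverge is the key lemma that a homogeneous degree-one form lying in the ideal generated by homogeneous degree-one forms is already a $K$-linear combination of them. The paper proves this by first reducing to the case $p = X_n$, then running Gauss--Jordan elimination on the system $\{p_i=0\}$ and arguing about leading versus free variables of the reduced row echelon form. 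Your argument --- decompose each coefficient $P_j$ into homogeneous components and compare the degree-one parts of both sides of $\ell_0 = \sum_j P_j\ell_j$ --- is shorter, needs no case analysis, and applies verbatim in the polynomial ring $K[u_\alpha : \alpha\in\N^m]$ on infinitely many variables, which is where the lemma is actually invoked (the coefficients $\alpha_{i,j}(y)$ live in all of $K\{y\}$), a point you rightly flag. The only detail worth making explicit is that the $u_\alpha=\partial^\alpha y$ are algebraically independent over $K$, so degree and homogeneity in $K\{y\}$ are well defined and extracting the degree-one component is legitimate; this is immediate from the definition of the ring of differential polynomials. Both approaches then conclude identically: a $K$-linear combination of the $\partial^{\sigma_{i,j}}\tilde{f}_i$ of order at most $l$ pulls back to a representation $f=\sum_i h_i f_i$ with $\deg h_i\Le l<k$, contradicting the minimality of $k$.
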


\begin{proof}
Suppose $f \in (f_1,\ldots,f_r)$, so there exist $g_1,\ldots,g_r \in K[X_1,\ldots,X_m]$ such that $f = g_1f_1 + \ldots + g_rf_r$.  As with the $f_i$s, there exist $\beta_{i,j} \in \N^m$ and $d_{i,j} \in K$, $j = 1,\ldots,M_i$, such that we can write each $g_i$ as
$$
g_i = \sum_{j = 1}^{M_i}d_{i,j}X^{\beta_{i,j}}.
$$
It is then easy to see that
\begin{equation}\label{eq:polytodiff}
\sum_{j = 1}^{M_1}d_{1,j}\partial^{\beta_{1,j}}\left(\tilde{f}_1\right) + \ldots + \sum_{j = 1}^{M_r}d_{r,j}\partial^{\beta_{r,j}}\left(\tilde{f}_r\right) = \sum_{j = 1}^N s_j \partial^{\gamma_j}y = \tilde{f}.
\end{equation}
Since $G = \{\tilde{f}_1,\ldots,\tilde{f}_r\}$, we thus have that $\tilde{f} \in [G]$, and since the maximum degree of the $g_i$s is $k$, the maximum order of the $\partial^{\beta_{i,j}}$s is also $k$, so $\tilde{f} \in (G)^{(k)}$.

It remains to show that $\tilde{f} \notin (G)^{(k-1)}$.  Suppose for a contradiction we have $\tilde{f} \in (G)^{(l)}$ for some $l < k$, so we can write
\begin{equation}\label{eq:lowerorder}
\tilde{f} = \sum_{j = 1}^{K_1} \alpha_{1,j}(y)\partial^{\sigma_{1,j}}\left(\tilde{f}_1\right) + \ldots + \sum_{j = 1}^{K_r}\alpha_{r,j}(y)\partial^{\sigma_{r,j}}\left(\tilde{f}_r\right)
\end{equation}
where the $\alpha_{i,j} \in K\{y\}$ and  $\ord \partial^{\sigma_{i,j}}\Le l < k$.

To complete the proof, we need the following fact about systems of homogeneous degree 1 polynomials.  Suppose $p,p_1,\ldots,p_s \in K[X_1,\ldots,X_n]$ are homogeneous degree 1 polynomials.  If there exist $q_1,\ldots,q_s \in K[X_1,\ldots,X_n]$ such that $p = q_1p_1 + \ldots + q_sp_s$, then we can in fact assume that all of the $q_i$ are constant.  Indeed, write $p = a_1X_1 + \ldots + a_nX_n$.  Assume without loss of generality that $a_n \neq 0$.  Since $p = q_1p_1 + \ldots + q_sp_s$, then $$X_n = q_0 + \frac{q_1}{a_n}p_1 + \ldots + \frac{q_s}{a_n}p_s,\quad q_0 := -\frac{a_1}{a_n}X_1 - \ldots - \frac{a_{n-1}}{a_n}X_{n-1}.$$  Thus, it suffices to prove the result when $p = X_n$.

For this, we order the variables so that $X_1 > \ldots > X_n$.  
Applying the Gauss-Jordan elimination to the system $\{p_i=0\}$, we obtain a new system $\{p_i'=0\}$
that is in reduced row echelon form.  
Moreover, every $p_i'$ is a linear combination of $p_1,\ldots,p_r$ (with coefficients in $K$) and vice versa.  There are two cases to consider.  If $X_n$ is a leading variable in $\{p_i'=0\}$, then because of the ordering on the $X_i$, we must have in fact that $X_n$ is one of the $p_i'$, and so the proof is complete.  

Therefore, suppose $X_n$ is not a leading variable of $\{p_i'=0\}$.  By assumption, $X_n \in (p_1,\ldots,p_s) = (p_1',\ldots,p_s')$.  This implies that for every solution $(\alpha_1,\ldots,\alpha_n)$ of the system $\{p_i = 0\}$ (or equivalently $\{p_i' = 0\}$),  $\alpha_n = 0$.  Thus, $X_n$ cannot be a free variable of $\{p_i'=0\}$, since there is a solution of the system $\{p_i' = 0\}$ for every possible value of any free variable (provided that a solution exists, which in this case is true, given by $(0,\ldots,0)$). 

Now, since the $\partial^{\gamma_j}y$ and $\partial^{\sigma_{i,j}}(\tilde{f}_i)$ in \eqref{eq:lowerorder} are all homogeneous of degree 1, by the above discussion, we can assume that the $\alpha_{i,j}$ are all constants $b_{i,j} \in K$, so we obtain
\begin{equation}\label{eq:contradictioneq}
\tilde{f} = \sum_{j = 1}^{K_1} b_{1,j}\partial^{\sigma_{1,j}}\left(\tilde{f}_1\right) + \ldots + \sum_{j = 1}^{K_r}b_{r,j}\partial^{\sigma_{r,j}}\left(\tilde{f}_r\right).
\end{equation}
Let
$$
h_i = \sum_{j = 1}^{K_i} b_{i,j}X^{\sigma_{i,j}}.
$$
Based on our construction of \eqref{eq:polytodiff} we can go backwards and deduce, using \eqref{eq:contradictioneq},
that $f = h_1f_1 + \ldots + h_rf_r$.  Since we know that $\ord \partial^{\sigma_{i,j}} \Le l$, this means that  
$\deg h_i\Le l$, $1\Le i \Le r$, contradicting the fact that the maximum degree must be at least $k > l$.
\end{proof}

\begin{rem}
If $f = 1$ in Theorem \ref{theo:polytodiff}, then $\tilde{f} = y$.  Thus, by considering the system $G_1 = \{G,1-ty\} \subset K\{t,y\}$, we have $1 \in (G_1)^{(k)}$ and $1 \notin (G_1)^{(k-1)}$.
\end{rem}

\begin{ex}\label{ex:lowerbound} 
For $m \Ge 2$, consider the following system of polynomial equations in $K[X_1,\ldots,X_m]$; cf.~\cite[page~578]{Brownawell}:
$$
f_1 = X_1^h, f_2 = X_1 - X_2^h, \ldots, f_{m-1} = X_{m-2} - X_{m-1}^h, f_m = 1-X_{m-1}X_m^{h-1}.
$$
It is shown that $1 \in (f_1,\ldots,f_m)$ and if $1 = g_1f_1 + \ldots + g_mf_m$, then $$\deg(g_1) \Ge h^m - h^{m-1} = h^{m-1}(h-1).$$  Thus, if $k$ is the maximum degree of the $g_i$ (that is smallest possible over the collection of all $g_i$ so that $1 = \sum g_if_i$), we must have that $k \Ge h^{m-1}(h-1)$.

Let us use this polynomial system to create a system of differential polynomial in $K\{y\}$ with derivations $\Delta = \{\partial_1,\ldots,\partial_m\}$.  Let $G$ be the system in $K\{y\}$ given by
\begin{equation}\label{eq:F1}
\tilde{f}_1 = \partial_1^hy, \tilde{f}_2 = \partial_1y-\partial_2^hy, \ldots, \tilde{f}_{m-1} = \partial_{m-2}y-\partial_{m-1}^hy, \tilde{f}_m = y-\partial_{m-1}\partial_m^hy.
\end{equation}
By the above discussion, we have $y \in (G)^{(k)}$ where $k \Ge h^{m-1}(h-1)$ and $y \notin (G)^{\left(h^{m-1}(h-1)-1\right)}$.  We have thus constructed a linear system $G$ in which the number of derivations of the elements of $G$ needed is exponential in the number of derivatives and polynomial in the order of the system.

We can construct an explicit linear combination of the $\tilde{f}_i$s and their derivatives equaling $y$ that requires exactly $h^{m-1}(h-1)$ derivations of $\tilde{f}_1$.  Explicit $g_i$s are constructed in~\cite{Brownawell} such that $1 = g_1f_1 + \ldots + g_mf_m$ and $\deg(g_1) = h^{m-1}(h-1)$ by observing that, setting $D = h^{m-1}(h-1)$, 
\begin{equation}\label{eq:polysys}
X_m^D\left(X_1^h\right) - \sum_{i = 2}^{m-1} X_m^D\left(X_{i-1}^{h^{i-1}} - \left(X_i^h\right)^{h^{i-1}}\right) + \left(1-\left(X_{m-1}X_m^{h-1}\right)^{h^{m-1}}\right) = 1.
\end{equation}
Thus, if we set
\begin{align*}
g_1 &= X_m^D \\
g_i &= X_m^D\left(\sum_{j = 0}^{h^{i-1}-1} X_{i-1}^{h^{i-1}-1-j}\left(X_i^h\right)^j\right) \hskip.5cm 2 \Le i \Le m-1 \\
g_n &= \sum_{j = 0}^{h^{m-1}-1} \left(X_{m-1}X_m^{h-1}\right)^j,
\end{align*}
then using \eqref{eq:polysys}, we have $1 = g_1f_1 + \ldots + g_mf_m$.

We can use these $g_i$s to find the desired linear combination of the $\tilde{f}_i$s and their derivatives.  Using the corresponding identities in $K[X_1,\ldots,X_m]$, we obtain that
\begin{align*}
\partial_{i-1}^{h^{i-1}}y - \partial_i^{h^i}y &= \sum_{j = 0}^{h^{i-1}-1}\partial_{i-1}^{h^{i-1}-j-1}\partial_i^{hj}\left(f_i\right) \hskip.5cm 2 \Le i \Le m-1 \\
y - \partial_{m-1}^{h^{m-1}}\partial_m^{h^{m-1}(h-1)}y &= \sum_{j = 0}^{h^{m-1}-1} \partial_{m-1}^j\partial_m^{j(h-1)}\left(f_m\right).
\end{align*}
Thus, setting $D = h^{m-1}(h-1)$, we can directly adapt \eqref{eq:polysys} to see that
\begin{equation}\label{eq:diffpolysys}
\partial_m^D\left(\partial_1^hy\right) - \sum_{i = 2}^{m-1} \partial_m^D\left(\partial_{i-1}^{h^{i-1}}y - \partial_i^{h^i}y\right) + \left(y - \partial_{m-1}^{h^{m-1}}\partial_m^{h^{m-1}(h-1)}y\right) = y.
\end{equation}
This gives us a linear combination of the $\tilde{f}_i$s and their derivatives that requires exactly $h^{m-1}(h-1)$ derivations of $\tilde{f}_1$, which we know is minimal by the polynomial case.
\end{ex}

\begin{ex}\label{ex:46}
We can use \eqref{eq:diffpolysys} to generalize this result to the case of multiple variables.  We will define a system in $K\{y_1,\ldots,y_n\}$ with derivatives $\Delta = \{\partial_1,\ldots,\partial_m\}$.  Let $m \Ge 2$. For $n=1$, we have~\eqref{eq:F1}. For $n \Ge 2$,  consider the collection of differential polynomials:
\begin{align*}
G_1 &= \left\{\partial_1^h y_1, \partial_1y_1 - \partial_2^hy_1,\partial_2y_1 - \partial_3^hy_1,\ldots,\partial_{m-2}y_1 - \partial_{m-1}^hy_1\right\} \\
G_i &= \left\{\partial_{m-1}y_{i-1} - \partial_1^hy_i,\partial_1y_i - \partial_2^hy_i,\ldots,\partial_{m-2}y_i - \partial_{m-1}^hy_i\right\} \hskip.5cm 2 \Le i \Le n-1 \\
G_n &= \left\{\partial_{m-1}y_{n-1}-\partial_1^hy_n,\partial_1y_n - \partial_2^hy_n,\ldots,\partial_{m-2}y_n - \partial_{m-1}^hy_n, y_n - \partial_{m-1}\partial_m^hy_n\right\}.
\end{align*}
Then let $G = \bigcup_{i = 1}^n G_i$.  We claim that $y_n \in (G)^{(k)}$ where $k \Ge h^{n(m - 1)}(h-1)$ and 
\begin{equation}\label{eq:yn}
y_n \notin (G)^{\left(h^{n(m-1)}(h-1) - 1\right)}.
\end{equation}  We can write a system similar to the one in \eqref{eq:diffpolysys} to produce $y_n$ in terms of the elements of $G$ and their derivatives needing exactly $h^{n(m-1)}(h-1)$ derivations of $\partial_1^hy_1$.  Let $E = h^{n(m-1)}(h-1)$.  Then we have
\begin{multline*}
\partial_m^E\left(\partial_1^hy_1\right) - \sum_{j = 1}^n \sum_{i = 2}^{m-1} \partial_m^E\left(\partial_{i-1}^{h^{(j-1)(m-1) + i - 1}}y_j - \partial_i^{h^{(j-1)(m-1)+i}}y_j\right) \\
- \sum_{j = 1}^{n-1}\partial_m^E\left(\partial_{m-1}^{h^{j(m-1)}}y_j - \partial_1^{h^{j(m-1) + 1}}y_{j+1}\right) + \left(y_n - \partial_{m-1}^{h^{n(m-1)}}\partial_m^{h^{n(m-1)}(h-1)}y_n\right) = y_n.
\end{multline*}

By the same argument that shows the minimality of $h^{m-1}(h-1)$ in Example \ref{ex:lowerbound}, we know that we must differentiate $\partial_1^hy_1$ at least $E$ times.  This shows~\eqref{eq:yn} and there is a $k \Ge E$ with $y_n \in (G)^{(k)}$.
\end{ex}

\setlength{\bibsep}{1.3pt}
\bibliographystyle{abbrvnat}
\small
\bibliography{bibdata}

\end{document}